\documentclass[final,3p, times, 12pt]{elsarticle}

\usepackage[centertags]{amsmath}
\usepackage{amsfonts}
\usepackage{amssymb}
\usepackage{amsthm}

\usepackage{graphicx}
\usepackage{multicol}

\theoremstyle{plain}
\newtheorem{thm}{Theorem}[section]
\newtheorem{cor}[thm]{Corollary}
\newtheorem{lem}[thm]{Lemma}
\newtheorem{prop}[thm]{Proposition}

\theoremstyle{definition}
\newtheorem{defn}[thm]{Definition}
\newtheorem{exam}[thm]{Example}
\newtheorem{rem}[thm]{Remark}

\theoremstyle{remark}
\numberwithin{equation}{section}


\newproof{pf}{Proof}


\begin{document}

\begin{frontmatter}

\title{Spectral characterizations of two families \\ of nearly complete bipartite graphs}

\author[CL]{Chia-an Liu\corref{cor}}\ead{liuchiaan8@gmail.com}
\author[CW]{Chih-wen Weng}\ead{weng@math.nctu.edu.tw}

\date{January 26, 2016}
\cortext[cor]{Corresponding author}

\address[CL]{Department of Financial and Computational Mathematics, I-Shou University, Kaohsiung, Taiwan}
\address[CW]{Department of Applied Mathematics, National Chiao-Tung University, Hsinchu, Taiwan}

\begin{abstract}
It is not hard to find many complete bipartite graphs which are not determined by their spectra. We show that the graph obtained by deleting an edge from a complete bipartite graph is determined by its spectrum. We provide some graphs, each of which is obtained from a complete bipartite graph by adding a vertex and an edge incident on the new vertex and an original vertex, which are not determined by their spectra.
\end{abstract}

\begin{keyword}
Bipartite graph\sep adjacency matrix\sep determined by the spectrum (DS)

\MSC[2010] 05C50\sep 15A18
\end{keyword}
\end{frontmatter}


\section{Introduction}   \label{s1}

The {\it adjacency matrix} $A=(a_{ij})$ of a simple graph $G$ is a $0$-$1$ square matrix with rows and columns indexed by the vertex set $VG$ of $G$ such that for any $i, j\in VG$, $a_{ij}=1$ iff $i,j$ are adjacent in $G.$ The {\it spectrum} of $G$ is the set of eigenvalues of its adjacency matrix $A$ together with their multiplicities. Two graphs are {\it cospectral} (also known as isospectral) if they share the same graph spectrum. To start our study, let us consider the smallest non-isomorphic cospectral graphs first given by Cvetkovi\'{c} \cite{c:71} as shown in Figure 1: the graph union $K_{2,2}\cup K_1$ and the star graph $K_{1,4},$ where $K_{p,q}$ denotes the complete bipartite graph of bipartition orders $p$ and $q.$ It is quick to check that their spectrum are both $\{[0]^3,\pm 2\}.$ More constructions of cospectral graphs can be found in \cite{gm:82,cdgt:88,hs:95,s:00,dh:02}.

\begin{center}
\begin{multicols}{2}
\begin{picture}(50,60)
\put(10,20){\circle*{3}} \put(10,40){\circle*{3}}
\put(40,10){\circle*{3}} \put(40,30){\circle*{3}}
\put(40,50){\circle*{3}}
\qbezier(10,20)(25,25)(40,30) \qbezier(10,20)(25,35)(40,50)
\qbezier(10,40)(25,35)(40,30) \qbezier(10,40)(25,45)(40,50)
\put(0, -30){$K_{2,2}\cup K_1$}
\end{picture}

\begin{picture}(50,80)
\put(10,60){\circle*{3}}
\put(40,10){\circle*{3}} \put(40,30){\circle*{3}} \put(40,50){\circle*{3}} \put(40,70){\circle*{3}}
\qbezier(10,60)(25,65)(40,70) \qbezier(10,60)(25,55)(40,50)
\qbezier(10,60)(25,45)(40,30) \qbezier(10,60)(25,35)(40,10)
\put(10, -10){$K_{1,4}$}
\end{picture}
\end{multicols}
\bigskip

{\bf Figure 1:} Two non-isomorphic cospectral graphs $K_{2,2}\cup K_1$ and $K_{1,4}.$
\end{center}

A graph $G$ is {\it determined by the spectrum} if all the cospectral graphs of $G$ are isomorphic to $G.$ We abbreviate `determined by the spectrum' to DS in the following. The question `which graphs are DS?' goes back for more than half a century and originates from chemistry~\cite{gp:56,cs:57}, \cite[Chapter~6]{cds:80}. After that, there appeared many examples and applications for the DS graphs. One of them is that in 1966 Fisher \cite{f:66} modeled the shape of a drum by a graph from considering a question of Kac \cite{k:66}: `Can you hear the shape of a drum?' To see more details, Van Dam, Haemers and Brouwer gave a great amount of surveys for the DS graphs \cite{dh:02,dh:03,dh:09},\cite[Chapter~14]{bh:11} in the past decades.

In~\cite{ds:04} the non-regular bipartite graphs with four distinct eigenvalues were studied, and whether a such connected graph on at most $60$ vertices is DS or not was determined. In~\cite{ds:05} bipartite biregular graphs with $5$ eigenvalues were studied, and all such connected graphs on at most $33$ vertices were determined. In this research we study two families of nearly complete bipartite graphs one-edge different from a complete bipartite graph which also have $4$ or $5$ distinct eigenvalues without the assumptions of regularity, connectivity, or bounds on the number of vertices.

\medskip

Let $G$ be a simple bipartite graph with $e$ edges. The {\it spectral radius} $\rho(G)$ of $G$ is the largest eigenvalue of the adjacency matrix of $G.$ It was shown in~\cite[Proposition~2.1]{bfp:08} that $\rho(G) \leq \sqrt{e}$ with equality if and only if $G$ is a complete bipartite graph with possibly some isolated vertices. It is direct that for any positive integer $p$ the regular complete bipartite graph $K_{p,p}$ is DS but, for example, the non-isomorphic bipartite graphs $K_{1,6}$ and $K_{2,3} \cup 2K_1$ are cospectral. There are several extending results~\cite{bfp:08,cfksw:10,lw:15,cfw:15} of the above bound, which aim to solve an analog of the Brualdi-Hoffman conjecture for non-bipartite graphs~\cite{bh:85}, proposed in~\cite{bfp:08}.

\medskip

Our research is motivated from the following {\it twin primes bound}  proposed in~\cite[Theorem~5.2]{cfw:15}: For $e \geq 4,$ $(e-1,e+1)$ is a pair of twin primes if and only if
$$\rho(e) < \sqrt{\frac{e+\sqrt{e^2-4(e-1-\sqrt{e-1})}}{2}}$$
where $\rho(e)$ denotes the maximal $\rho(G)$ of a bipartite graph $G$ on $e$ edges which is not a union of a complete bipartite graph and some isolated vertices.  We need to introduce the notations $K_{p,q}^-$ and $K_{p,q}^+$ of the graphs which are one-edge different from a complete bipartite graph. For $2 \leq \min\{p,q\},$ let $K_{p,q}^-$ denote the graph with $pq-1$ edges obtained from $K_{p,q}$ by deleting an edge, and $K_{p,q}^+$ denote the graph with $pq+1$ edges obtained from $K_{p,q}$ by adding a new vertex $x$ and a new edge $xy$ where $y$ is a vertex in the partite set of order $\min\{p,q\}.$ Note that $K_{2,q}^+ = K_{2,q+1}^-$ for $q \geq 2.$ Two examples of such graphs are shown in Figure 2.

\newpage

\begin{center}
\begin{multicols}{2}
\begin{picture}(50,100)
\put(10,40){\circle*{3}} \put(10,60){\circle*{3}}
\put(40,30){\circle*{3}} \put(40,50){\circle*{3}}
\put(40,70){\circle*{3}}
\put(10,20){\circle*{3}} \put(40,10){\circle*{3}}
\qbezier(10,60)(25,65)(40,70) \qbezier(10,60)(25,55)(40,50)
\qbezier(10,60)(25,45)(40,30) \qbezier(10,40)(25,55)(40,70)
\qbezier(10,40)(25,45)(40,50)
\qbezier(10,40)(25,35)(40,30)
\qbezier(10,20)(25,45)(40,70) \qbezier(10,20)(25,35)(40,50)
\qbezier(10,20)(25,25)(40,30)
\qbezier(10,60)(25,35)(40,10) \qbezier(10,40)(25,25)(40,10)
\put(10, -10){$K_{3,4}^-$}
\end{picture}

\begin{picture}(50,100)
\put(10,40){\circle*{3}} \put(10,60){\circle*{3}}
\put(40,30){\circle*{3}} \put(40,50){\circle*{3}}
\put(40,70){\circle*{3}}
\put(10,20){\circle*{3}} \put(40,10){\circle*{3}}
\put(40,90){\circle*{3}}
\qbezier(10,60)(25,65)(40,70) \qbezier(10,60)(25,55)(40,50)
\qbezier(10,60)(25,45)(40,30) \qbezier(10,40)(25,55)(40,70)
\qbezier(10,40)(25,45)(40,50)
\qbezier(10,40)(25,35)(40,30)
\qbezier(10,20)(25,45)(40,70) \qbezier(10,20)(25,35)(40,50)
\qbezier(10,20)(25,25)(40,30)
\qbezier(10,60)(25,35)(40,10) \qbezier(10,40)(25,25)(40,10)
\qbezier(10,60)(25,75)(40,90)
\qbezier(10,20)(25,15)(40,10)
\put(10, -10){$K_{3, 4}^+$}
\end{picture}
\end{multicols}
\bigskip

{\bf Figure 2:} The graphs $K_{3,4}^-$ and $K_{3,4}^+$ which are one-edge different from $K_{3,4}.$
\end{center}

The paper is organized as follows. Preliminary contents are in Section~\ref{sec_pre}. Theorem~\ref{thm_Kpq-} in Section~\ref{sec_Kpq-} proves that the all graphs $K_{p,q}^-$ for $2 \leq p \leq q$ are DS. Then Theorem~\ref{thm_Kpq+} in Section~\ref{sec_Kpq+} find the all pairs $(p,q)$ such that the bipartite graph $K_{p,q}^+$ is DS. Furthermore, for each $K_{p,q}^+$'s which is not DS we also find its unique non-isomorphic cospectral graph.

\bigskip

\section{Preliminary}   \label{sec_pre}

Basic results are provided in this section for later used.
\medskip

\begin{lem}(\cite[Proposition~2.1]{bfp:08})   \label{lem_sqrte}
Let $G$ be a simple bipartite graph  with $e$ edges. Then
$$\rho(G) \leq \sqrt{e}$$ with equality
iff $G$ is a disjoint union of a complete bipartite graph and isolated vertices.
\end{lem}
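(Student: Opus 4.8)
The plan is to exploit the bipartite structure of $G$. Write the bipartition as $VG = X \cup Y$; after ordering the vertices of $X$ before those of $Y$, the adjacency matrix takes the block form
$$A = \begin{pmatrix} 0 & B \\ B^{T} & 0 \end{pmatrix},$$
where $B$ is the $0$-$1$ biadjacency matrix. The nonzero eigenvalues of $A$ are exactly $\pm\sigma$ as $\sigma$ ranges over the nonzero singular values of $B$, so $\rho(G)^{2}$ equals the largest eigenvalue of the positive semidefinite matrix $BB^{T}$.

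The inequality is then a one-line trace estimate: since $BB^{T}$ is positive semidefinite, $\rho(G)^{2} = \lambda_{\max}(BB^{T}) \le \operatorname{tr}(BB^{T})$, and because $B$ has $0$-$1$ entries, $\operatorname{tr}(BB^{T}) = \sum_{i,j} B_{ij}^{2} = \sum_{i,j} B_{ij} = e$. Hence $\rho(G) \le \sqrt{e}$.

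For the equality case, $\rho(G)^{2} = \operatorname{tr}(BB^{T})$ together with positive semidefiniteness forces every eigenvalue of $BB^{T}$ other than the largest one to vanish, so $\operatorname{rank}(BB^{T}) \le 1$ and therefore $\operatorname{rank}(B) \le 1$. If $B = 0$ then $e = 0$ and $G$ consists of isolated vertices only (the degenerate case of a complete bipartite graph plus isolated vertices). If $\operatorname{rank}(B) = 1$, write $B = uv^{T}$; because all entries of $B$ lie in $\{0,1\}$, the rows $i$ with $u_{i} \ne 0$ and the columns $j$ with $v_{j} \ne 0$ have all their common entries equal to $1$, i.e.\ they induce a complete bipartite graph, while each vertex corresponding to a zero row or a zero column is isolated. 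Conversely, if $G = K_{p,q} \cup sK_{1}$ then $B = J_{p,q}$, so $BB^{T} = q J_{p,p}$ has largest eigenvalue $pq = e$ and $\rho(G) = \sqrt{e}$, which closes the equivalence.

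The computations here are all routine; the one place that needs care is the final combinatorial step, namely arguing that a $0$-$1$ matrix of rank at most $1$ is, up to zero rows and columns, an all-ones block — and that these zero rows and columns are precisely the isolated vertices of $G$. Everything else is immediate once the problem is transferred to the singular values of $B$.
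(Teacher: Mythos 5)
Your proof is correct. Note that the paper itself gives no proof of this lemma --- it is quoted verbatim from Bhattacharya, Friedland and Peled --- so there is no internal argument to compare against; your write-up is a legitimate self-contained proof. Your route through the biadjacency matrix $B$ is essentially the standard one in disguise: the trace bound $\rho(G)^{2}=\lambda_{\max}(BB^{T})\le \operatorname{tr}(BB^{T})=e$ is the same estimate as observing that bipartiteness pairs the eigenvalues $\pm\rho(G)$, so $2\rho(G)^{2}\le \operatorname{tr}(A^{2})=2e$ (the pairing the paper records as Proposition~\ref{prop_bipartite}). Your equality analysis is sound: equality forces all eigenvalues of the positive semidefinite matrix $BB^{T}$ other than the top one to vanish, hence $\operatorname{rank}(B)=\operatorname{rank}(BB^{T})\le 1$, and a rank-one $0$-$1$ matrix $B=uv^{T}$ has $B_{ij}=1$ exactly when $u_{i}\neq 0$ and $v_{j}\neq 0$, i.e.\ its support is an all-ones rectangle, which is precisely a complete bipartite component together with isolated vertices; the converse computation with $J_{p,q}J_{p,q}^{T}=qJ_{p,p}$ (zero rows and columns from isolated vertices do not affect the spectrum beyond adding zeros) closes the equivalence. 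The only fringe case is $e=0$, where $B=0$ and the statement is read with the degenerate complete bipartite graph, as you note; this is a matter of convention rather than a gap.
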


\medskip

The following result gives the relations between the spectrum and the numbers of vertices and edges in a graph which is proved simply by the definition of the adjacency matrix and its square.
\begin{prop}\label{prop_order_size}
Let $G$ be a simple graph with eigenvalues $\lambda_1 \geq \lambda_2 \geq \cdots \geq \lambda_n.$ Then
\begin{itemize}
\item[(i)] $G$ has $n$ vertices, and
\item[(ii)] $G$ has $\frac{1}{2}\sum\limits_{i=1}^n \lambda_i^2$ edges.
\end{itemize}
\end{prop}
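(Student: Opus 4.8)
The plan is to read off both statements directly from the adjacency matrix $A=(a_{ij})$ of $G$, which by definition is an $n\times n$ real symmetric $0$-$1$ matrix whose rows and columns are indexed by $VG$; here $n=|VG|$.

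For part (i), since $A$ is a real symmetric matrix it is orthogonally diagonalizable, hence it has exactly $n$ real eigenvalues when counted with multiplicity. Listing these in weakly decreasing order as $\lambda_1\ge\lambda_2\ge\cdots\ge\lambda_n$ is precisely the hypothesis, so the number of vertices of $G$ equals $n$.

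For part (ii), I would compute $\operatorname{tr}(A^2)$ in two ways. On one hand, the eigenvalues of $A^2$ are $\lambda_1^2,\dots,\lambda_n^2$, so $\operatorname{tr}(A^2)=\sum_{i=1}^n\lambda_i^2$. On the other hand, the $(i,i)$ entry of $A^2$ is $\sum_j a_{ij}a_{ji}=\sum_j a_{ij}^2=\sum_j a_{ij}$, using that $A$ is symmetric with entries in $\{0,1\}$ (so $a_{ij}^2=a_{ij}$); this last sum is exactly the degree $d_i$ of vertex $i$. Summing over $i$ and invoking the handshake identity $\sum_{i=1}^n d_i=2e$, where $e$ is the number of edges of $G$, gives $\sum_{i=1}^n\lambda_i^2=\operatorname{tr}(A^2)=2e$, that is, $e=\frac12\sum_{i=1}^n\lambda_i^2$.

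There is no genuine obstacle here; the only points that need care are that the spectrum is taken with multiplicities (so that counting $n$ eigenvalues is meaningful) and the identity $a_{ij}^2=a_{ij}$ for $0$-$1$ entries, which is what turns $\operatorname{tr}(A^2)$ into the sum of the degrees.
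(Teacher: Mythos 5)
Your proof is correct and matches the paper's intent exactly: the paper gives no written proof, remarking only that the result follows ``simply by the definition of the adjacency matrix and its square,'' which is precisely your trace-of-$A^2$ computation together with counting the $n$ eigenvalues of the symmetric $n\times n$ matrix $A$. Nothing further is needed.
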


\medskip

Since we focus on the bipartite graphs, a well-known spectral characterization of bipartite graphs \cite[Proposition~3.4.1]{bh:11} is used in this research.
\begin{prop}\label{prop_bipartite}
A simple graph $G$ is bipartite if and only if for each eigenvalue $\lambda$ of $G,$ $-\lambda$ is also an eigenvalue of $G$ with the same multiplicity.
\end{prop}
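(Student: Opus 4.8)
The plan is to prove the two implications separately, translating the symmetry of the spectrum under $\lambda\mapsto-\lambda$ into a structural fact about the adjacency matrix in one direction and into a combinatorial fact about closed walks in the other.

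For the forward implication, suppose $G$ is bipartite with parts $X$ and $Y$. Ordering the vertices so that those of $X$ precede those of $Y$, the adjacency matrix takes the block form $A=\begin{pmatrix} O & B\\ B^{\top} & O\end{pmatrix}$. I would let $S$ be the diagonal $\pm 1$ matrix equal to $+1$ on the coordinates indexed by $X$ and $-1$ on those indexed by $Y$; then $S=S^{-1}$ and a one-line computation gives $SAS=-A$. Hence $A$ and $-A$ are similar, so they have the same characteristic polynomial, i.e. the same eigenvalues with the same multiplicities; since the eigenvalues of $-A$ are exactly the negatives of those of $A$, this is precisely the assertion that the spectrum of $G$ is invariant under negation, multiplicities included. (Equivalently, and without block matrices: if $u$ is a $\lambda$-eigenvector then flipping the signs of the entries of $u$ on $Y$ gives a $(-\lambda)$-eigenvector, and this sign flip is a linear bijection between the $\lambda$- and $(-\lambda)$-eigenspaces.)

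For the converse, assume the spectrum of $G$ is symmetric about $0$ with multiplicities, and let $\lambda_1\ge\cdots\ge\lambda_n$ be the eigenvalues of $A$. For every odd $m$ the power sum $\sum_{i=1}^{n}\lambda_i^{m}$ vanishes, because the nonzero eigenvalues cancel in $\pm$ pairs and the zero eigenvalues contribute nothing; thus $\operatorname{tr}(A^{m})=\sum_i\lambda_i^{m}=0$ for all odd $m$. On the other hand $(A^{m})_{vv}$ is the number of closed walks of length $m$ based at $v$, a nonnegative integer, so $\operatorname{tr}(A^{m})=0$ forces $(A^{m})_{vv}=0$ for every vertex $v$; hence $G$ has no closed walk of odd length, in particular no odd cycle, and a graph with no odd cycle is bipartite.

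Both the identity $SAS=-A$ and the walk-counting interpretation of $(A^{m})_{vv}$ are routine, so I would not dwell on them. The one point needing a word of care is the final link in the converse, passing from ``no closed walk of odd length'' to ``bipartite'': here one notes that a shortest odd closed walk, were it to exist, would have no repeated vertices and hence be an odd cycle (in a simple graph, of length $\ge 3$), so absence of odd closed walks is the same as absence of odd cycles, which is the classical criterion for bipartiteness. I expect this combinatorial step to be the main (and only mild) obstacle; the rest is linear-algebraic bookkeeping.
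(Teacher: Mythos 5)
Your proof is correct. Note that the paper does not actually prove this proposition: it is quoted as a well-known fact with a citation to Brouwer and Haemers (Proposition~3.4.1 there), so your write-up supplies an argument the paper deliberately omits. Both halves of what you do are sound and standard. The forward direction via the signature matrix $S$ with $SAS=-A$ (equivalently, flipping the signs of an eigenvector on one part) is exactly the usual similarity argument, and the converse via $\operatorname{tr}(A^{m})=\sum_i\lambda_i^{m}=0$ for odd $m$, together with the combinatorial interpretation of $(A^{m})_{vv}$ as the number of closed walks of length $m$ at $v$, is airtight: nonnegative integers summing to zero must all vanish, so there are no odd closed walks. Your handling of the one delicate point is also right: a shortest odd closed walk cannot revisit a vertex (otherwise it splits at the repeated vertex into two shorter closed walks whose lengths sum to an odd number, one of which is odd and shorter), hence it is an odd cycle, and absence of odd cycles is the classical characterization of bipartiteness. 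An alternative converse, common in the literature, uses Perron--Frobenius on each connected component (a connected graph is bipartite iff $-\rho$ is an eigenvalue), but your trace argument is more elementary and needs no connectivity reduction.
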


\medskip

Then a spectral characterization of complete bipartite graphs is direct.
\begin{prop}\label{prop_complete_bi}
Let $G$ be a simple graph with spectrum $\{[0]^{n-2},\pm \lambda\}$ where $n \geq 2$ is the number of vertices in $G.$ Then $\lambda^2$ is a nonnegative integer, and $G$ is the union of some isolated vertices (if any) and a complete bipartite graph with $\lambda^2$ edges.
\end{prop}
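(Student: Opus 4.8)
The plan is to read off the needed arithmetic from the preliminary propositions and then invoke Lemma~\ref{lem_sqrte}. First I would use Proposition~\ref{prop_order_size}(ii): since the eigenvalues of $G$ are $\lambda,\,-\lambda$ and $0$ (with multiplicity $n-2$), the number $e$ of edges of $G$ equals $\tfrac12\bigl(\lambda^2+(-\lambda)^2\bigr)=\lambda^2$. As $e$ is a nonnegative integer, so is $\lambda^2$; this gives the first assertion.

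Next I would verify that $G$ is bipartite. The multiset $\{[0]^{n-2},\pm\lambda\}$ is symmetric about the origin, i.e.\ every real number $\mu$ and its negative $-\mu$ occur as eigenvalues with the same multiplicity ($0$ is its own negative, and $\lambda,\,-\lambda$ each occur once). Hence Proposition~\ref{prop_bipartite} applies and $G$ is bipartite.

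Finally I would identify the structure of $G$. Replacing $\lambda$ by $|\lambda|$ does not change the spectrum $\{[0]^{n-2},\pm\lambda\}$, so I may assume $\lambda\ge 0$; then $\rho(G)=\lambda=\sqrt{e}$. If $\lambda=0$ then $e=0$ and $G$ is edgeless, i.e.\ a union of $n$ isolated vertices, which is the stated conclusion in degenerate form. If $\lambda>0$, then $G$ is a bipartite graph attaining $\rho(G)=\sqrt{e}$ with $e\ge 1$, so Lemma~\ref{lem_sqrte} forces $G$ to be a disjoint union of a complete bipartite graph $H$ and some isolated vertices; since isolated vertices contribute no edges, $H$ has exactly $e=\lambda^2$ edges, completing the proof.

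\textbf{Main obstacle.} There is really no hard step here: the statement is an immediate corollary of Lemma~\ref{lem_sqrte} once the bookkeeping is done. The only points needing a little care are reducing to the case $\lambda\ge 0$ so that $\lambda$ is genuinely the spectral radius $\rho(G)$, checking the symmetry hypothesis of Proposition~\ref{prop_bipartite}, and noting that the degenerate case $\lambda=0$ (equivalently $e=0$) has to be read off separately rather than through Lemma~\ref{lem_sqrte}.
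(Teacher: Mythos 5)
Your argument is correct and follows exactly the paper's route: Proposition~\ref{prop_order_size}(ii) gives $e=\lambda^2$, Proposition~\ref{prop_bipartite} gives bipartiteness, and the equality case of Lemma~\ref{lem_sqrte} gives completeness; the paper's proof is just a terser version of the same three steps. Your extra care with the sign of $\lambda$ and the degenerate case $\lambda=0$ is fine but not a different method.
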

\begin{proof}
By Proposition~\ref{prop_bipartite} and Proposition~\ref{prop_order_size}(ii), $G$ is bipartite with $\lambda^2$ edges. Since the equality meets in Lemma~\ref{lem_sqrte}, the completeness follows.
\end{proof}

\medskip

From Proposition~\ref{prop_complete_bi} one can quickly find the all complete bipartite graphs which are DS.
\begin{cor}
For any positive integers $p\leq q,$ $K_{p,q}$ is DS if and only if $p'\leq p$ and $q'\geq q$ for any positive integers $p'\leq q'$ satisfying $p'q'=pq.$
\end{cor}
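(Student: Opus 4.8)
The plan is to use Proposition~\ref{prop_complete_bi} to pin down exactly which graphs can be cospectral with $K_{p,q}$, and then reduce the ``DS'' condition to an elementary statement about factorizations of $pq$. The spectrum of $K_{p,q}$ is $\{[0]^{p+q-2},\pm\sqrt{pq}\}$, so by Proposition~\ref{prop_complete_bi} every graph $H$ cospectral with $K_{p,q}$ is of the form $H=K_{p',q'}\cup sK_1$ with $p'q'=pq$ and $s=p+q-p'-q'\ge 0$; conversely, every such union is genuinely cospectral with $K_{p,q}$ by Propositions~\ref{prop_order_size} and~\ref{prop_bipartite}. Hence $K_{p,q}$ is DS precisely when the only pair $(p',q')$ with $p'\le q'$, $p'q'=pq$ and $p'+q'\le p+q$ is $(p',q')=(p,q)$.

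Next I would study $p'+q'$ over factorizations with $p'q'=pq$ fixed. Writing $p'+q'=p'+pq/p'$ and noting that $f(x)=x+pq/x$ is strictly decreasing on $(0,\sqrt{pq}\,]$, together with $p\le\sqrt{pq}$ (since $p\le q$), one obtains: for $p'\le q'$ the inequality $p'+q'\le p+q$ holds if and only if $p'\ge p$ (equivalently $q'\le q$), with equality exactly when $p'=p$. This is the single computational observation on which the argument rests, and essentially the only place where care is needed --- one must check strict monotonicity up to and including the endpoint $\sqrt{pq}$, so that $f(p')=f(p)$ forces $p'=p$.

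For the reverse implication ($\Leftarrow$): assume every factorization $p'q'=pq$ with $p'\le q'$ satisfies $p'\le p$ and $q'\ge q$. Given any cospectral $H=K_{p',q'}\cup sK_1$ as above, the assumption gives $p'\le p$, while $s\ge 0$ gives $p'+q'\le p+q$; by the monotonicity observation these force $p'=p$, hence $q'=q$ and $s=0$, i.e.\ $H\cong K_{p,q}$, so $K_{p,q}$ is DS. For the forward implication I would argue contrapositively: since $p'\le p\iff q'\ge q$ under $p'q'=pq$, the negation of the stated condition is that some factorization has $p'>p$ with $p'\le q'$; then $p<p'\le\sqrt{pq}$, so $p'+q'<p+q$ strictly, hence $s=p+q-p'-q'\ge 1$ and $H=K_{p',q'}\cup sK_1$ is cospectral with $K_{p,q}$ but not isomorphic to it (as $H$ has an isolated vertex, whereas $K_{p,q}$ with $p,q\ge 1$ has none). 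Thus $K_{p,q}$ is not DS.

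The main obstacle, such as it is, is purely bookkeeping: confirming that ``$p'\le p$'' and ``$q'\ge q$'' are interchangeable under $p'q'=pq$, and applying the strict versus non-strict monotonicity of $f$ correctly --- notably in the boundary case $p=q$, where $\sqrt{pq}=p=q$ and the stated condition is vacuously true, recovering the familiar fact that $K_{p,p}$ is DS. Beyond Proposition~\ref{prop_complete_bi} there is no real depth here.
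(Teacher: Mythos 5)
Your proof is correct and follows exactly the route the paper intends: the corollary is stated there as an immediate consequence of Proposition~\ref{prop_complete_bi}, and your elaboration --- classifying every cospectral graph as $K_{p',q'}\cup sK_1$ with $p'q'=pq$, $s=p+q-p'-q'\ge 0$, and reducing the DS condition to the strict monotonicity of $x+pq/x$ on $(0,\sqrt{pq}\,]$ --- is precisely the fleshing-out of that one-line derivation. No gaps.
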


\medskip

It is not difficult to compute the spectrum of each bipartite graph $K_{p,q}^-$ or $K_{p,q}^+$ \cite{cfksw:10,lw:15,cfw:15}.
\begin{prop}\label{prop_sp}
Let $2 \leq \min\{p,q\}$ be positive integers.
\begin{itemize}
\item[(i)] The graph $K_{p,q}^-$ has spectrum
\begin{equation}
\left\{
[0]^{p+q-4},\pm\sqrt{\frac{pq-1\pm\sqrt{(pq-1)^2-4(p-1)(q-1)}}{2}}
\right\},~~~\text{and}
\nonumber
\end{equation}
\item[(ii)] the graph $K_{p,q}^+$ has spectrum
\begin{equation}
\left\{
[0]^{p+q-3},\pm\sqrt{\frac{pq+1\pm\sqrt{(pq+1)^2-4(p-1)q}}{2}}
\right\}.
\nonumber
\end{equation}
\end{itemize}
\end{prop}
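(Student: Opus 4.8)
The plan is to read off both spectra from the obvious equitable partitions of the two graphs. In either case the graph is bipartite, so by Proposition~\ref{prop_bipartite} its eigenvalues occur in pairs $\pm\lambda$; writing the adjacency matrix as $A=\begin{pmatrix}0&Z\\ Z^{\top}&0\end{pmatrix}$ with $Z$ the biadjacency matrix, the nonzero eigenvalues of $A$ are exactly $\pm\sqrt{\mu}$ as $\mu$ runs over the nonzero eigenvalues of $ZZ^{\top}$ (counted with multiplicity), which is what produces the nested radicals in the statement. So it suffices to locate the nonzero eigenvalues and to count the multiplicity of $0$. Both graphs have a natural vertex partition into at most four cells according to the ``type'' of a vertex, and this partition is equitable; I would pass to the $4\times4$ quotient matrix, whose eigenvalues are among those of $A$, and note that the orthogonal complement of its column space is spanned by vectors each supported on a single cell and summing to zero there, on which $A$ acts as $0$ because all rows of $A$ indexed by a given cell coincide. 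This pins down the multiplicity of the eigenvalue $0$ as soon as the four quotient eigenvalues are shown to be nonzero.

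For $K_{p,q}^{-}$, say the missing edge joins $u$ (in the part of size $p$) to $w$ (in the part of size $q$); take the cells $\{u\}$, the other $p-1$ vertices of that part, $\{w\}$, and the other $q-1$ vertices of that part. The quotient matrix has the anti-block-diagonal shape $\begin{pmatrix}0&M\\ N&0\end{pmatrix}$ with $M=\begin{pmatrix}0&q-1\\ 1&q-1\end{pmatrix}$ and $N=\begin{pmatrix}0&p-1\\ 1&p-1\end{pmatrix}$, so its square is $\mathrm{diag}(MN,NM)$ and the $\mu$'s I need are the eigenvalues of the $2\times2$ matrix $MN$. A one-line computation gives $\operatorname{tr}(MN)=pq-1$ and $\det(MN)=(p-1)(q-1)$, hence the eigenvalues of $MN$ are the roots of $t^{2}-(pq-1)t+(p-1)(q-1)=0$, i.e.\ $\tfrac{pq-1\pm\sqrt{(pq-1)^{2}-4(p-1)(q-1)}}{2}$. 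Since $(p-1)(q-1)\neq0$, all four quotient eigenvalues $\pm\sqrt{\,\cdot\,}$ are nonzero, so the remaining $(p+q)-4=p+q-4$ eigenvalues equal $0$; this is (i).

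For $K_{p,q}^{+}$ I may assume $p\le q$ (the case $q<p$ being the relabelling $K_{q,p}^{+}$), so the new vertex $x$ is attached to a vertex $y$ of the $p$-side and lies on the side of size $q+1$. With cells $\{y\}$, the other $p-1$ vertices of the $p$-side, the $q$ original vertices of the other side, and $\{x\}$, the same reasoning applies with $M=\begin{pmatrix}q&1\\ q&0\end{pmatrix}$ and $N=\begin{pmatrix}1&p-1\\ 1&0\end{pmatrix}$; now $\operatorname{tr}(MN)=pq+1$ and $\det(MN)=(p-1)q\neq0$, so the nonzero eigenvalues are $\pm\sqrt{\tfrac{pq+1\pm\sqrt{(pq+1)^{2}-4(p-1)q}}{2}}$ and the remaining $(p+q+1)-4=p+q-3$ eigenvalues equal $0$, giving (ii).

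The computation itself is entirely routine; the only place that needs a moment of care is the multiplicity bookkeeping — namely checking that the biadjacency matrix has rank exactly $2$, equivalently that $\det(MN)\neq0$, which is precisely where the hypothesis $p,q\ge2$ is used — so that none of the four quotient eigenvalues is secretly $0$ and the number of zeros is exactly $p+q-4$, resp.\ $p+q-3$. One should also keep in mind that the $\pm\sqrt{\,\cdot\,}$ in the displayed spectrum is to be read as four eigenvalues listed with multiplicity, which stays correct even in the degenerate case where the inner discriminant vanishes and two of them coincide.
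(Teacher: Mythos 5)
Your proof is correct, and the bookkeeping is sound: the four twin-classes (the endpoints of the deleted edge and their non-neighbours for $K_{p,q}^-$; the attached vertex $y$, the rest of its side, the opposite side, and the pendant $x$ for $K_{p,q}^+$) do form an equitable partition, vectors supported on one class with zero sum are killed by $A$ because vertices in a class have identical neighbourhoods, and your trace/determinant values $\operatorname{tr}(MN)=pq-1$, $\det(MN)=(p-1)(q-1)$, resp. $pq+1$, $(p-1)q$, check out, so the nonzero eigenvalues and the multiplicity of $0$ are exactly as claimed. For comparison: the paper does not prove this proposition at all; it simply cites earlier work (Chen--Fu--Kim--Stehr--Watts, Liu--Weng, Cheng--Fan--Weng), where the computation is done in essentially the same spirit, by grouping vertices into these classes and extracting the characteristic polynomial of the small quotient (divisor) matrix. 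So your argument is a self-contained version of the standard derivation rather than a genuinely different route; what it buys is that the statement no longer rests on an external reference, and your explicit remark that $\det(MN)\neq 0$ (using $p,q\geq 2$) is exactly the point that pins the number of zero eigenvalues, which the citation leaves implicit. One small reading note: the displayed formula in (ii) is written for the convention $p\leq q$ (the pendant edge attached on the side of order $\min\{p,q\}$), and your reduction of the case $q<p$ to the relabelled graph $K_{q,p}^+$ is the right way to square your proof with the proposition as stated, since the expression $(p-1)q$ is not symmetric in $p$ and $q$.
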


\medskip

We introduce some sets of bipartite graphs for later use.
\begin{defn}\label{defn_sets_K}
Let the sets of bipartite graphs
\begin{eqnarray*}
\mathbb{K}^0&:=&\{K_{p,q} \mid p,q \in \mathbb{N}\},
\\
\mathbb{K}^-&:=&\{K_{p,q}^- \mid 2 \leq p \leq q, (p,q)\neq(2,2)\},
\\
\mathbb{K}^+&:=&\{K_{p,q}^+ \mid 2 \leq p \leq q\}, \text{ and}
\\
\mathbb{K}&:=&\mathbb{K}^0\cup \mathbb{K}^- \cup \mathbb{K}^+.
\end{eqnarray*}
\end{defn}
Then a Lemma is direct from \cite[Lemma~4.1]{cfw:15}.
\begin{lem}\label{lemma_cheng}
Let $G$ be a simple bipartite graph on $e$ edges without isolated vertices. If the spectral radius $\rho(G)$ of $G$ satisfies
$$\rho(G) \geq \sqrt{\frac{e+\sqrt{e^2-4(e-1-\sqrt{e-1})}}{2}},$$
then $G \in \mathbb{K}.$
\end{lem}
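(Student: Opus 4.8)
The plan is to deduce this directly from \cite[Lemma~4.1]{cfw:15} by a short classification argument, so the main work is to match the hypothesis here with the hypothesis there and to check that the graphs appearing in the conclusion of the cited lemma are exactly the members of $\mathbb{K}$ when isolated vertices are forbidden. First I would recall that \cite[Lemma~4.1]{cfw:15} asserts that a bipartite graph $G$ on $e$ edges whose spectral radius reaches the stated threshold must be one of a short list of extremal graphs: a complete bipartite graph, or a complete bipartite graph with one edge deleted, or a complete bipartite graph with one pendant edge attached (possibly together with isolated vertices). The threshold value $\sqrt{(e+\sqrt{e^2-4(e-1-\sqrt{e-1})})/2}$ is, by Lemma~\ref{lem_sqrte}, strictly below $\sqrt e$ for $e$ not of the special form, and the cited result says that in the band between this threshold and $\sqrt e$ nothing but these near-complete-bipartite graphs can occur.

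Next I would translate ``complete bipartite plus one pendant edge'' into our notation $K_{p,q}^+$. Adding a pendant edge $xy$ to $K_{p,q}$ gives a graph isomorphic to $K_{p,q}^+$ precisely when $y$ lies in the smaller partite set; if instead $y$ lies in the larger partite set one gets $K_{q,p}^+$ with the roles of $p,q$ reversed, and by the convention $2\le p\le q$ in Definition~\ref{defn_sets_K} both cases are already recorded in $\mathbb{K}^+$ (using also $K_{2,q}^+=K_{2,q+1}^-$ and the base cases where $\min\{p,q\}=1$, i.e.\ a star with a pendant edge, which is itself a tree one can check lies in $\mathbb K$ or is handled by Lemma~\ref{lem_sqrte}). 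Similarly ``complete bipartite minus an edge'' is $K_{p,q}^-$, recorded in $\mathbb{K}^-$ except for the degenerate $(2,2)$ case, which is excluded there but does not satisfy the hypothesis for $e=3$ anyway since $K_{2,2}^-=P_4$ has $\rho=(1+\sqrt5)/2<\sqrt{(3+\sqrt{3+2\sqrt2})/2}$; I would verify this one inequality by hand. Finally, the hypothesis ``without isolated vertices'' lets me discard the ``possibly with isolated vertices'' clause in the cited lemma, so $G$ is exactly one of $K_{p,q}$, $K_{p,q}^-$, or $K_{p,q}^+$, i.e.\ $G\in\mathbb{K}^0\cup\mathbb{K}^-\cup\mathbb{K}^+=\mathbb{K}$.

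The main obstacle is bookkeeping rather than mathematics: I must make sure the list of extremal graphs in \cite[Lemma~4.1]{cfw:15} is reproduced faithfully and that every such graph, once isolated vertices are stripped off and the partite sets are ordered, genuinely falls into one of $\mathbb{K}^0,\mathbb{K}^-,\mathbb{K}^+$ — in particular that no small or degenerate case (stars, paths on few vertices, $K_{2,2}^\pm$) slips through the excluded parameter ranges $(p,q)\neq(2,2)$ and $2\le\min\{p,q\}$ without either lying in $\mathbb K$ or failing the spectral-radius hypothesis. Checking these finitely many boundary graphs against the numerical threshold is the only computation needed, and it is routine.
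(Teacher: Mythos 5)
Your approach coincides with the paper's: the paper offers no argument beyond declaring the lemma ``direct from'' \cite[Lemma~4.1]{cfw:15}, and your proposal is that same citation together with the routine bookkeeping matching the cited extremal list to $\mathbb{K}^0\cup\mathbb{K}^-\cup\mathbb{K}^+$, which is exactly what is implicitly being done. The only slip is arithmetic and harmless: for $e=3$ the quantity under the inner radical is $e^2-4(e-1-\sqrt{e-1})=1+4\sqrt{2}$, not $3+2\sqrt{2}$, and with either value the threshold still exceeds $\rho(P_4)=(1+\sqrt{5})/2$, so your boundary check for $K_{2,2}^-$ stands.
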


\medskip

The following result \cite[Proposition~1]{dh:03} is well-known.
\begin{prop}\label{prop_path}
The path with $n$ vertices is DS.
\end{prop}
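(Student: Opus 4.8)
The plan is the standard strategy for showing a graph is DS: take an arbitrary graph $H$ cospectral with $P_n$ and use low-order spectral moments to force $H\cong P_n$. First, Proposition~\ref{prop_order_size} shows that $H$ has the same number of vertices and edges as $P_n$, namely $n$ and $n-1$; moreover $\rho(H)=\rho(P_n)=2\cos\frac{\pi}{n+1}<2$.

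Next I would show $H$ is a tree. If $H$ contained a cycle it would contain some $C_m$ as a subgraph, and padding the adjacency matrix of that $C_m$ with zero rows and columns gives a nonnegative matrix entrywise below $A(H)$, so by Perron--Frobenius monotonicity $\rho(H)\ge\rho(C_m)=2$, contradicting $\rho(H)<2$. Hence $H$ is a forest, and a forest on $n$ vertices with $c$ components has $n-c$ edges, so $c=1$ and $H$ is a tree.

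The last step pins down the degree sequence of $H$ from its fourth spectral moment. Cospectrality gives $\mathrm{tr}\,A(H)^4=\mathrm{tr}\,A(P_n)^4$. In a tree any two distinct vertices have at most one common neighbour (two would span a $4$-cycle), so $(A^2)_{ij}\in\{0,1\}$ whenever $i\ne j$ while $(A^2)_{ii}=d_i$; hence
\[
\mathrm{tr}\,A^4=\sum_{i,j}(A^2)_{ij}^2=\sum_i d_i^2+\sum_{i\ne j}(A^2)_{ij}=\sum_i d_i^2+\sum_i d_i(d_i-1)=2\sum_i d_i^2-2(n-1),
\]
using $\sum_i d_i=2(n-1)$. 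The same identity holds for $P_n$, whence $\sum_i d_i(H)^2=\sum_i d_i(P_n)^2=4n-6$. Writing $d_i=1+e_i$ with $e_i\ge 0$ and $\sum_i e_i=n-2$, one gets $\sum_i d_i^2=3n-4+\sum_i e_i^2\ge 4n-6$ with equality iff every $e_i\in\{0,1\}$; so $H$ has degree sequence $(2,\dots,2,1,1)$, and a connected acyclic graph of maximum degree at most $2$ is a path, giving $H\cong P_n$.

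I expect no real obstacle: the only points needing a word of care are the Perron--Frobenius monotonicity of the spectral radius under passing to a subgraph, the fourth-moment identity for trees, and the tiny cases $n\le 3$ (immediate, since there is a unique tree on $n$ vertices). Alternatively, after proving $H$ is a tree one could finish by citing the classical facts that the characteristic polynomial of a forest equals its matching polynomial and that $P_n$ is the unique tree on $n$ vertices with the most matchings (equivalently, with smallest spectral radius), but the self-contained fourth-moment route above needs no external extremal result.
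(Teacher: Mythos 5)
Your proof is correct, but it is worth noting that the paper does not prove this proposition at all: it simply quotes it as a well-known result of van Dam and Haemers \cite{dh:03}, so you have supplied a self-contained argument where the authors rely on a citation. Your route is sound: Proposition~\ref{prop_order_size} gives $n$ vertices and $n-1$ edges for any cospectral mate $H$; Perron--Frobenius monotonicity together with $\rho(P_n)=2\cos\frac{\pi}{n+1}<2$ and $\rho(C_m)=2$ rules out cycles, so $H$ is a forest and, by the edge count, a tree; and since a tree has no $4$-cycles, the identity $\operatorname{tr}A^4=\sum_{i,j}(A^2)_{ij}^2=2\sum_i d_i^2-2(n-1)$ (the off-diagonal sum being $\sum_k d_k(d_k-1)$, counted over the middle vertex) pins down $\sum_i d_i^2=4n-6$, which by your convexity argument forces all degrees to lie in $\{1,2\}$, i.e.\ $H\cong P_n$; the cases $n\le 3$ are immediate as you say. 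Compared with the classical treatment in \cite{dh:03}, which exploits the classification of graphs with spectral radius below $2$, your argument trades that structural input for the elementary fourth-moment count, which is arguably more self-contained; either way, the conclusion matches what the paper assumes.
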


\bigskip

\section{Spectral characterizations of $K_{p,q}^-$}   \label{sec_Kpq-}

Note that the set $\mathbb{K}^-$ of nearly bipartite graphs is defined in Definition~\ref{defn_sets_K}. We prove that each graph $G\in \{K_{2,2}^-\}\cup \mathbb{K}^-$ is DS in this section.

\begin{thm}\label{thm_Kpq-}
For any positive integers $2 \leq p \leq q,$ the graph $K_{p,q}^-$ is DS.
\end{thm}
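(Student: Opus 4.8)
The plan is to show that any graph $H$ cospectral with $G = K_{p,q}^-$ must be isomorphic to $G$, by first pinning down the gross structure of $H$ from the spectrum and then arguing that the remaining freedom collapses. By Proposition~\ref{prop_sp}(i) the spectrum of $G$ is $\{[0]^{p+q-4},\pm\lambda_1,\pm\lambda_2\}$ with $\lambda_1^2+\lambda_2^2 = pq-1$ and $\lambda_1^2\lambda_2^2 = (p-1)(q-1)$; in particular $H$ has $n = p+q$ vertices and $e = pq-1$ edges by Proposition~\ref{prop_order_size}, and $H$ is bipartite by Proposition~\ref{prop_bipartite} (its spectrum is symmetric). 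The first step is to observe that $\rho(H) = \lambda_1 = \sqrt{(pq-1+\sqrt{(pq-1)^2-4(p-1)(q-1)})/2}$, and to check that this value satisfies the hypothesis of Lemma~\ref{lemma_cheng}, i.e. that $\lambda_1^2 \ge (e+\sqrt{e^2-4(e-1-\sqrt{e-1})})/2$ with $e = pq-1$; this is a monotonicity/algebra estimate comparing $(p-1)(q-1)$ with $pq-2-\sqrt{pq-2}$, valid for $2\le p\le q$. Hence either $H$ has an isolated vertex or $H\in\mathbb{K}$.

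Next I would eliminate the isolated-vertex case: if $H = H'\cup K_1$ with $H'$ on $p+q-1$ vertices having the same nonzero spectrum, then $\rho(H') = \lambda_1$ and $H'$ has $pq-1$ edges, so by Lemma~\ref{lem_sqrte} (now applied to $H'$, whose edge count $pq-1$ is strictly less than $\lambda_1^2$... wait, $\rho(H')=\lambda_1\le\sqrt{pq-1}$, with equality iff $H'$ is a complete bipartite graph plus isolated vertices, which is impossible since $\lambda_1^2<pq-1$ strictly for $(p,q)\ne(2,2)$); iterating, $H$ cannot shed vertices without contradicting the edge count, so $H$ has no isolated vertices and therefore $H\in\{K_{2,2}^-\}\cup\mathbb{K}^-$ (using that $\mathbb{K}$ restricted to $n$ vertices and $e$ edges, with the right $\rho$, is exactly the one-edge-modified complete bipartite family, and $K_{2,2}^-=P_4$ is the exceptional small case covered by Proposition~\ref{prop_path}). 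Finally, within $\{K_{2,2}^-\}\cup\mathbb{K}^-\cup\mathbb{K}^0\cup\mathbb{K}^+$ one compares invariants: members of $\mathbb{K}^0$ have spectrum $\{[0]^{n-2},\pm\mu\}$ with only $4$... actually only two nonzero eigenvalues counted without sign, so they have fewer distinct eigenvalues than $G$ unless $p+q-4<0$; members of $\mathbb{K}^+$ have $n = p'+q'+1$ vertices and $p'q'+1$ edges with the nonzero spectrum of Proposition~\ref{prop_sp}(ii), and matching both $n$ and $e$ and the symmetric functions $\lambda_1^2+\lambda_2^2,\ \lambda_1^2\lambda_2^2$ of the squared nonzero eigenvalues to those of $K_{p,q}^-$ forces a Diophantine system with no solution. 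So $H = K_{p',q'}^-$ for some $2\le p'\le q'$, and then $p'+q' = p+q$ together with $(p'-1)(q'-1) = (p-1)(q-1)$ (equivalently $p'q' = pq$) forces $\{p',q'\} = \{p,q\}$, whence $H\cong G$.

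The main obstacle is the comparison step: showing that $K_{p,q}^-$ is not cospectral with any $K_{p',q'}^+$. Both families realize the same shape of spectrum $\{[0]^{N},\pm a,\pm b\}$, so one genuinely has to use all available invariants — the number of vertices $p+q = p'+q'+1$, the number of edges $pq-1 = p'q'+1$, and the product of the two nonzero squared eigenvalues $(p-1)(q-1) = (p'-1)q'$ — and derive a contradiction from the resulting system (from the first two relations $p'q' = pq-2$ and $p'+q' = p+q-1$, so $p',q'$ are roots of $t^2-(p+q-1)t+(pq-2)$, and one must show the discriminant condition is incompatible with $(p-1)(q-1)=(p'-1)q'$ except in degenerate ranges already handled). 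I expect this to reduce to a short but slightly delicate integer/inequality argument; the small cases $p=2$ (where $K_{2,q}^+ = K_{2,q+1}^-$, consistent with the claim since that graph is then simply $K_{2,q+1}^-\in\mathbb{K}^-$) should be noted explicitly so they are not mistaken for counterexamples.
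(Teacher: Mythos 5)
The central step of your proposal fails: the elimination of isolated vertices is not a valid argument. If $H=H'\cup K_1$, Lemma~\ref{lem_sqrte} applied to $H'$ gives only $\rho(H')\leq\sqrt{pq-1}$, with equality iff $H'$ is a complete bipartite graph plus isolated vertices; since here $\lambda_1^2<pq-1$ strictly, there is no contradiction at all --- the strict inequality merely says $H'$ is not complete bipartite, and is perfectly consistent with $H$ having isolated vertices. So ``$H$ cannot shed vertices without contradicting the edge count'' does not follow (indeed, cospectral mates with isolated vertices genuinely occur in this circle of problems, cf.\ Remark~\ref{rem_Kpq-}). This matters because Lemma~\ref{lemma_cheng} requires a graph without isolated vertices, so you cannot apply it to $H$ itself, and your final Diophantine comparisons use the exact vertex counts $p'+q'=p+q$ (resp.\ $p'+q'+1=p+q$), which is precisely what remains unjustified. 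The paper proceeds differently: it first rules out two or more nontrivial components by a parity/perfect-square argument (the quantity $(pq-1)^2-4(p-1)(q-1)$ would have to be a square of the form $(pq-1-2k)^2$, but it lies strictly between $(pq-3)^2$ and $(pq-1)^2$), then applies Lemma~\ref{lemma_cheng} to the unique nontrivial component $G_0$ (which has no isolated vertices by construction), carrying only the inequality $|V(G_0)|\leq p+q$; the equality of vertex numbers is \emph{deduced at the end} from $p''q''=pq$ and $(p''-1)(q''-1)=(p-1)(q-1)$, and the $\mathbb{K}^+$ candidate is not ``eliminated'' but forced by the vertex inequality to have $p'=2$ and folded into $\mathbb{K}^-$ via $K_{2,q'}^+=K_{2,q'+1}^-$. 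Your outline can be repaired along these lines (apply the lemma to $H$ with its isolated vertices removed, and redo the comparisons with an inequality on the number of vertices), but as written the step is a genuine gap.

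A secondary inaccuracy: the spectral-radius threshold estimate is \emph{not} ``valid for $2\leq p\leq q$.'' One needs $(p-1)(q-1)\leq (pq-2)-\sqrt{pq-2}$, equivalently $p+q-3\geq\sqrt{pq-2}$, which fails for $(p,q)=(2,2)$ (it reads $1\geq\sqrt{2}$). That is exactly why the paper splits off $K_{2,2}^-=P_4$ at the outset and settles it by Proposition~\ref{prop_path}; you mention $P_4$ only in a parenthetical inside a later step, but it must be treated as a separate base case since the Lemma~\ref{lemma_cheng} route does not cover it.
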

\begin{proof}
If $p=q=2$ then $K_{p,q}^-$ is a path on $4$ vertices. Hence $K_{2,2}^-$ is DS by Proposition~\ref{prop_path}.
Let $2<q$ and $G$ be a simple graph with the same spectrum as $K_{p,q}^-.$
From Proposition~\ref{prop_order_size}, the numbers of vertices and edges in $G$ are
$|V(G)| = p+q$ and $|E(G)| = pq-1.$
Additionally, Proposition~\ref{prop_bipartite} tells that $G$ is a bipartite graph.

\medskip

Suppose $G$ has at least $2$ nontrivial components $G_1$ and $G_2,$ where a nontrivial component is a connected graph with at least one edge. Then the spectra of $G_1$ and $G_2$ share the nonzero eigenvalues of $G.$ Since $G$ is bipartite, $G_1$ and $G_2$ are both bipartite. By Proposition~\ref{prop_bipartite} again and without loss of generality we have
$\text{sp}(G_1) =\{[0]^{m_1}, \pm e_1\}$
and
$\text{sp}(G_2) = \{[0]^{m_2}, \pm e_2\}$
for some nonnegative integers $m_1,m_2$ with $m_1+m_2+4 \leq p+q,$
where
$$e_1=\sqrt{\frac{pq-1+\sqrt{(pq-1)^2-4(p-1)(q-1)}}{2}}$$
and
$$e_2=\sqrt{\frac{pq-1-\sqrt{(pq-1)^2-4(p-1)(q-1)}}{2}}$$
by  Proposition~\ref{prop_sp}(i).
From Proposition~\ref{prop_complete_bi} $G_1$ is a complete bipartite graph with $e_1$ edges, and thus $(pq-1)^2-4(p-1)(q-1)$ is a perfect square of type $(pq-1-2k)^2$ for some $k \in \mathbb{N}.$ However,
\begin{eqnarray}
(pq-1)^2-4(p-1)(q-1) &=& (pq-1)^2 - 4(pq-p-q+1)
\nonumber \\
&>& (pq-1)^2 - 4(pq-2)
\nonumber \\
&=& (pq-3)^2,
\nonumber
\end{eqnarray}
which is a contradiction.

\medskip

Therefore $G$ has exactly one nontrivial component $G_0.$ Then
\begin{equation}
\text{sp}(G_0)=
\left\{[0]^m,\pm\sqrt{\frac{pq-1\pm\sqrt{(pq-1)^2-4(p-1)(q-1)}}{2}}\right\}
\nonumber
\end{equation}
for some nonnegative integer $m$ with
\begin{equation}\label{eq_num_ver_G0}
|V(G_0)| = m+4 \leq p+q.
\end{equation}
Then by Proposition~\ref{prop_order_size}~(ii)
\begin{equation}\label{eq_num_edge_G0}
e:=|E(G_0)|=|E(G)|=pq-1.
\end{equation}
Note that the spectral radius of $G_0$
\begin{eqnarray}
\rho(G_0)&=&\sqrt{\frac{pq-1+\sqrt{(pq-1)^2-4(p-1)(q-1)}}{2}}
\nonumber \\
&=& \sqrt{\frac{e+\sqrt{e^2-4(e-1-(p+q-3))}}{2}}
\nonumber \\
&\geq& \sqrt{\frac{e+\sqrt{e^2-4(e-1-\sqrt{e-1})}}{2}},
\nonumber
\end{eqnarray}
since
\begin{equation}
(p+q-3)^2-(e-1)=(p-2)^2+(q-3)^2+p(q-2)-2 \geq 0
\nonumber
\end{equation}
for $2\leq p \leq q$ and $3 \leq q.$ By Lemma~\ref{lemma_cheng} $G_0 \in \mathbb{K}$. From Proposition~\ref{prop_complete_bi} $G_0$ is not a complete bipartite graph. Hence $G_0 \in \mathbb{K}^-$ or $G_0 \in \mathbb{K}^+.$ Suppose $G_0 \in \mathbb{K}^+,$ \emph{i.e.}, $G_0 = K_{p',q'}^+$ for some $2 \leq p' \leq q'.$ Then by \eqref{eq_num_ver_G0}
\begin{equation}\label{eq_num_ver_G0_2}
|V(G_0)| = p'+q'+1 \leq p+q,
\end{equation}
and by \eqref{eq_num_edge_G0}
\begin{equation}\label{eq_num_edge_G0_2}
|E(G_0)| = p'q'+1 = e = pq-1.
\end{equation}
According to Proposition~\ref{prop_sp}~(ii),
\begin{equation}\label{eq_p'q'_G0}
(p'-1)q' = (p-1)(q-1).
\end{equation}
\eqref{eq_num_edge_G0_2} and \eqref{eq_p'q'_G0} imply $q'+3=p+q.$ Then by \eqref{eq_num_ver_G0_2} $p'\leq 2,$ and hence $p'=2.$ Therefore $G_0=K_{2,q'}^+=K_{2,q'+1}^-$ for some $q' \geq 2,$ and we have $G_0 \in \mathbb{K}^-.$ Let $G_0=K_{p'',q''}^-$ for some $2 \leq p''\leq q''$ and $3 \leq q''.$ Then we respectively rewrite the equations \eqref{eq_num_ver_G0_2}, \eqref{eq_num_edge_G0_2} and \eqref{eq_p'q'_G0} as
\begin{eqnarray}
|V(G_0)| &=& p''+q'' \leq p+q,
\label{eq_num_ver_G0_3} \\
|E(G_0)| &=& p''q''-1 = pq-1~~~\text{ and}
\label{eq_num_edge_G0_3} \\
(p''-1)(q''-1) &=& (p-1)(q-1),
\label{eq_p''q''_G0}
\end{eqnarray}
where the third equation \eqref{eq_p''q''_G0} is from Proposition~\ref{prop_sp}~(i). \eqref{eq_num_edge_G0_3} and \eqref{eq_p''q''_G0} imply $|V(G_0)|=p''+q''=p+q=|V(G)|.$ Hence $G_0=G.$ The equalities in both sum and product of $p''\leq q''$ and $p \leq q$ imply that $(p'',q'')=(p,q).$ Hence $G = G_0 = K_{p,q}^-,$ and the result follows.
\end{proof}

\begin{rem}\label{rem_Kpq-}
From Theorem~\ref{thm_Kpq-} we have $K_{2,q}^+=K_{2,q+1}^-$ is DS for $2 \leq q.$ However, not all graphs in $\mathbb{K}^+$ are DS. For example, the non-isomorphic graphs
$$K_{m+2,4m+2}^+~~~\text{ and }~~~K_{2m+2,2m+3}^-\cup mK_1$$
are cospectral for each $m \in \mathbb{N}.$
\end{rem}

\begin{cor}
$K_{2,q}^+$ is DS for $2 \leq q.$
\qed
\end{cor}

\bigskip

\section{Spectral characterizations of $K_{p,q}^+$}\label{sec_Kpq+}

Theorem~\ref{thm_Kpq-} shows that for $2 \leq p \leq q$ the graph $K_{p,q}^-$ is DS and its proof seems useful for the study on the graph $K_{p,q}^+.$ However, Remark~\ref{rem_Kpq-} immediately gives a family of $K_{p,q}^+$'s which are not DS. In this section we present a sufficient and necessary condition to determine whether $K_{p,q}^+$ is DS or not for each pair $(p,q).$ Furthermore, we also find the all non-isomorphic cospectral graphs of every $K_{p,q}^+$ which is not DS.

\begin{thm}\label{thm_Kpq+}
Let $3 \leq p \leq q$ be positive integers. Then $K_{p,q}^+$ is not DS if and only if the quadratic polynomial
\begin{equation}\label{quadratic_polynomial}
x^2-(q+3)x+(pq+2)=0
\end{equation}
has two integral roots  $p''\leq q''$ in $[2, \infty).$ Moreover, if $K_{p,q}^+$ is not DS then $K_{p'',q''}^- \cup (p-2)K_1$ is its unique non-isomorphic cospectral graph.
\end{thm}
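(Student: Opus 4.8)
The plan is to mirror the structure of the proof of Theorem~\ref{thm_Kpq-}, tracking where the $\pm 1$ sign change forces genuinely new behaviour. Let $G$ be cospectral with $K_{p,q}^+$. By Proposition~\ref{prop_order_size} and Proposition~\ref{prop_bipartite}, $G$ is bipartite with $p+q+1$ vertices and $pq+1$ edges. First I would rule out $G$ having two nontrivial components: if it did, each would be a complete bipartite graph contributing one $\pm$ pair, forcing $(pq+1)^2-4(p-1)q$ to be a perfect square of the form $(pq+1-2k)^2$; but $(pq+1)^2-4(p-1)q = (pq-1)^2 + 4q - 4(p-1)q = (pq-1)^2 - 4(p-2)q$, and for $3\le p\le q$ one checks this lies strictly between $(pq-3)^2$ and $(pq-1)^2$ unless it equals $(pq-1)^2$ exactly (i.e. $p=2$, excluded) — I expect the interval argument to pin it down, possibly with one small case handled separately. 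Hence $G$ has exactly one nontrivial component $G_0$, with $\mathrm{sp}(G_0)=\{[0]^m,\pm e_1,\pm e_2\}$ and $m+5\le p+q+1$, and $|E(G_0)| = pq+1 = e$.

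Next I would apply Lemma~\ref{lemma_cheng}: I need $\rho(G_0) = \rho(K_{p,q}^+) \ge \sqrt{(e+\sqrt{e^2-4(e-1-\sqrt{e-1})})/2}$, which by Proposition~\ref{prop_sp}(ii) amounts to verifying $(p+q-3)^2 \ge e-1 = pq$, and this should follow from a sum-of-squares rewriting analogous to the one in Theorem~\ref{thm_Kpq-} (valid for $3\le p\le q$; the excluded case $p=2$ is exactly where it can fail, consistent with Remark~\ref{rem_Kpq-}). Therefore $G_0 \in \mathbb{K}$, and since $G_0$ is not complete bipartite (Proposition~\ref{prop_complete_bi}), $G_0 \in \mathbb{K}^-\cup\mathbb{K}^+$. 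The case $G_0 \in \mathbb{K}^+$: if $G_0 = K_{p',q'}^+$ then matching edge counts gives $(p'-1)q' = (p-1)q$ and $p'q'+1 = pq+1$, whence $p'=p$, $q'=q$, so $G_0 = K_{p,q}^+$ and $G=G_0$ is isomorphic to $K_{p,q}^+$ — this accounts for the "DS" side.

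The substantive case is $G_0 \in \mathbb{K}^-$, say $G_0 = K_{p'',q''}^-$ with $2\le p''\le q''$. Edge count gives $p''q''-1 = pq+1$, i.e. $p''q'' = pq+2$, and Proposition~\ref{prop_sp}(i) applied to both graphs (equating the value of $(p-1)q$ from the $K^+$ spectrum with $(p''-1)(q''-1)$ from the $K^-$ spectrum) gives $(p''-1)(q''-1) = (p-1)q = pq - q$. Subtracting, $p''+q'' = p''q'' - (p''-1)(q''-1) + 1 = (pq+2) - (pq-q) + 1 = q+3$. Thus $p''$ and $q''$ are precisely the roots of $x^2-(q+3)x+(pq+2)=0$, which establishes the stated equation. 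The vertex count of $G$ then forces $(p+q+1) - (p''+q'') = (p+q+1)-(q+3) = p-2$ isolated vertices, so $G \cong K_{p'',q''}^- \cup (p-2)K_1$; one must also check $(p'',q'')\neq(2,2)$ and $q''\ge 3$ so this graph genuinely lies in $\mathbb{K}^-$ (or handle $K_{2,2}^-$, a path, directly — it is DS by Proposition~\ref{prop_path}, and cannot arise here since $p''q'' = pq+2 \ge 11 > 4$). Finally I must verify the converse and the uniqueness: if \eqref{quadratic_polynomial} has integral roots $p''\le q''$ in $[2,\infty)$, then Proposition~\ref{prop_sp} together with the count $(p-2)K_1$ shows $K_{p'',q''}^-\cup(p-2)K_1$ is cospectral with $K_{p,q}^+$ and clearly non-isomorphic (different number of components, or a vertex of degree $q''\ge$ something); and the argument above shows any cospectral mate must be of this exact form, giving uniqueness. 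The main obstacle I anticipate is the first step — cleanly excluding the two-nontrivial-component case and any sporadic small configurations — since the $K^+$ spectral data is slightly less rigid than the $K^-$ data; the algebraic manipulations in the $\mathbb{K}^-$ case are routine once the setup is in place.
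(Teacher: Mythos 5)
Your outline reproduces the paper's proof essentially step for step (exclude two nontrivial components via the discriminant, invoke Lemma~\ref{lemma_cheng} to place $G_0$ in $\mathbb{K}$, split into the $\mathbb{K}^+$ and $\mathbb{K}^-$ cases, derive the quadratic from $p''q''=pq+2$ and $(p''-1)(q''-1)=(p-1)q$, and count the $p-2$ isolated vertices), and the substantive $\mathbb{K}^-$ analysis and the converse/uniqueness discussion are correct. Two of the computations you hedged on need repair, though neither changes the outcome. First, $(pq+1)^2-4(p-1)q=(pq-1)^2+4q$, not $(pq-1)^2-4(p-2)q$; since $4(p-1)q>0$ as well, the discriminant lies strictly between $(pq-1)^2$ and $(pq+1)^2$, which already contradicts its being of the form $(pq+1-2k)^2$ — no interval case analysis or sporadic small cases are needed (this is exactly the paper's one-line argument). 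Second, the hypothesis of Lemma~\ref{lemma_cheng} does not reduce to $(p+q-3)^2\geq pq$: writing the discriminant as $e^2-4(e-1-X)$ with $e=pq+1$ forces $X=e-1-(p-1)q=q$, so the inequality to verify is $q^2\geq e-1=pq$, i.e.\ $q\geq p$, which is immediate from the hypothesis; your inequality $(p+q-3)^2\geq pq$ is true for $3\leq p\leq q$ but is the weaker statement (for $p>3$ one has $p+q-3>q$), so checking it would not by itself license the application of the lemma — that parameter belongs to the $K_{p,q}^-$ proof, not this one. (Minor: the zero multiplicity bound is $m+4\leq p+q+1$, not $m+5$, since $G$ may have no isolated vertices; you never use it, so nothing breaks.) With these corrections your argument coincides with the paper's.
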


\begin{proof}
Let $G$ be a simple graph with the same spectrum as $K_{p,q}^+.$
We prove Theorem~\ref{thm_Kpq+} by two steps. In the first part of proof we show that $G_0 \in \mathbb{K}^- \cup \mathbb{K}^+$ where $G_0$ is obtained from $G$ by deleting all the isolated vertices (if any). This process is similar to what we have done in the proof of Theorem~\ref{thm_Kpq-}. In the second part of proof, we prove that $K_{p,q}^+$ is not DS if and only if \eqref{quadratic_polynomial} has two integral roots $p'',q''$ and $G_0 = K_{p'',q''}^-.$ Moreover, if $K_{p,q}^+$ is not DS then its only non-isomorphic cospectral graphs is obtained by adding a corresponding number $p-2$ of isolated vertices to $K_{p'',q''}^-.$

\medskip

From Proposition~\ref{prop_order_size}, the numbers of vertices and edges in $G$ are
$|V(G)| = p+q+1$ and  $|E(G)| = pq+1.$ Additionally, Proposition~\ref{prop_bipartite} tells that $G$ is a bipartite graph. Suppose $G$ has at least $2$ nontrivial components $G_1$ and $G_2$. Then the spectra of $G_1$ and $G_2$ share the nonzero eigenvalues of $G.$ Since $G$ is bipartite, $G_1$ and $G_2$ are both bipartite. By Proposition~\ref{prop_bipartite} and without loss of generality, we have
$sp(G_1) = \{[0]^{m_1},\pm e_1$
and
$sp(G_2) = \{[0]^{m_2},\pm e_2\}$
for some nonnegative integers $m_1,m_2$ with $m_1+m_2+4 \leq p+q+1,$
where $$e_1=\sqrt{\frac{pq+1+\sqrt{(pq+1)^2-4(p-1)q}}{2}}$$
and $$e_2=\sqrt{\frac{pq+1-\sqrt{(pq+1)^2-4(p-1)q}}{2}}$$ by  Proposition~\ref{prop_sp}(ii).
From Proposition~\ref{prop_complete_bi}, $G_1$ is a complete bipartite graph with $e_1$ edges, and thus $(pq+1)^2-4(p-1)q$ is a perfect square of type $(pq+1-2k)^2$ for some $k \in \mathbb{N}.$ However,
\begin{equation}
(pq+1)^2-4(p-1)q > (pq+1)^2 - 4pq = (pq-1)^2,
\nonumber
\end{equation}
which is a contradiction.

\medskip

Therefore $G$ has exactly one nontrivial component $G_0.$ Then
\begin{equation}
\text{sp}(G_0)=
\left\{[0]^m,\pm\sqrt{\frac{pq+1\pm\sqrt{(pq+1)^2-4(p-1)q}}{2}}\right\}
\nonumber
\end{equation}
for some nonnegative integer $m$ with
\begin{equation}\label{eq_num_ver_G0_+}
|V(G_0)| = m+4 \leq p+q+1.
\end{equation}
Then by Proposition~\ref{prop_order_size}~(ii)
\begin{equation}\label{eq_num_edge_G0_+}
e:=|E(G_0)|=|E(G)|=pq+1.
\end{equation}
Note that the spectral radius of $G_0$
\begin{eqnarray}
\rho(G_0)&=&\sqrt{\frac{pq+1+\sqrt{(pq+1)^2-4(p-1)q}}{2}}
\nonumber \\
&=& \sqrt{\frac{e+\sqrt{e^2-4(e-1-q)}}{2}}
\nonumber \\
&\geq& \sqrt{\frac{e+\sqrt{e^2-4(e-1-\sqrt{e-1})}}{2}},
\nonumber
\end{eqnarray}
since
\begin{equation}
q^2-(e-1)=q^2-pq=q(q-p) \geq 0
\nonumber
\end{equation}
for $3 \leq p \leq q.$ By Lemma~\ref{lemma_cheng} $G_0 \in \mathbb{K}$. From Proposition~\ref{prop_complete_bi}, $G_0$ is not a complete bipartite graph. Hence $G_0 \in \mathbb{K}^-$ or $G_0 \in \mathbb{K}^+.$ Here we complete the first part of proof.

\medskip

Suppose $G_0 \in \mathbb{K}^+,$ \emph{i.e.}, $G_0 = K_{p',q'}^+$ for some $3 \leq p' \leq q'.$ Then by \eqref{eq_num_ver_G0_+}
\begin{equation}\label{eq_num_ver_G0_2_+}
|V(G_0)| = p'+q'+1 \leq p+q+1,
\end{equation}
and by \eqref{eq_num_edge_G0_+}
\begin{equation}\label{eq_num_edge_G0_2_+}
|E(G_0)| = p'q'+1 = e = pq+1.
\end{equation}
According to Proposition~\ref{prop_sp}~(ii),
\begin{equation}\label{eq_p'q'_G0_+}
(p'-1)q' = (p-1)q.
\end{equation}
\eqref{eq_num_edge_G0_2_+} and \eqref{eq_p'q'_G0_+} imply $q'=q$ and $p'=p.$ Therefore $G=G_0=K_{p,q}^+.$
\medskip

Suppose $G_0 \in \mathbb{K}^-.$ Let $G_0=K_{p'',q''}^-$ for some $2 \leq p''\leq q''$ and $3 \leq q''.$
Similar to the equations \eqref{eq_num_ver_G0_2_+}, \eqref{eq_num_edge_G0_2_+} and \eqref{eq_p'q'_G0_+} above, $G$ is not DS if and only if there exists integral pair $(p'',q'')$ that satisfies
\begin{eqnarray}
|V(G_0)| &=& p''+q'' \leq p+q+1,
\label{eq_num_ver_G0_3_+} \\
|E(G_0)| &=& p''q''-1 = pq+1~~~\text{ and}
\label{eq_num_edge_G0_3_+} \\
(p''-1)(q''-1) &=& (p-1)q,
\label{eq_p''q''_G0_+}
\end{eqnarray}
where the third equation \eqref{eq_p''q''_G0_+} is from Proposition~\ref{prop_sp}. Note that \eqref{eq_num_edge_G0_3_+} and \eqref{eq_p''q''_G0_+} imply
\begin{equation}
p''+q''=q+3
\label{eq_p''q''_q}
\end{equation}
and hence \eqref{eq_num_ver_G0_3_+} automatically holds.
Conversely, \eqref{eq_num_edge_G0_3_+} and \eqref{eq_p''q''_q} imply
\eqref{eq_p''q''_G0_+}.
 Hence the graph $G_0=K_{p'',q''}^-$ exists if and only if quadratic polynomial in \eqref{quadratic_polynomial} has two integral roots $p'',q''.$
Note that $G_0=K_{p'',q''}^-$ is the only graph found in $\mathbb{K}^-\cup\mathbb{K}^+$ except for $K_{p,q}^+.$
Hence we conclude that for each pair of positive integers $3 \leq p \leq q,$ $K_{p,q}^+$ is not DS if and only if \eqref{quadratic_polynomial} has two integral roots $p'',q''$ and the only non-isomorphic cospectral graph is obtained from $K_{p'',q''}^-$ by adding a corresponding number $(p+q+1)-(p''+q'')=p-2$ of isolated vertices by \eqref{eq_p''q''_q}. Here we complete the second part of proof, and the result follows.
\end{proof}
\medskip

The following lemma helps us to exhaustedly enumerate $K_{p, q}^+$ which has a non-isomorphic cospectral graph by using Theorem~\ref{thm_Kpq+}.
\begin{lem}\label{lem_abb't}
Let $3 \leq p \leq q$  be integers. Then the quadratic polynomial in \eqref{quadratic_polynomial}
has two integral roots $p'', q''\in [2, \infty)$ if and only if
there exist nonnegative integers $a,b,b',t$ with $1 \leq b,b' < a,$ $\text{gcd}(a,b)=1,$ $bb' \equiv 1~(\text{mod}~a),$ and $bb'+t \geq 2$ such that $p,q,p'',q''$ can be written as
\begin{eqnarray}
p&=&b(a-b)t+bb'+\frac{b(1-bb')}{a}+1,
\label{eq_abb't_p} \\
q&=&a^2t+ab',
\label{eq_abb't_q} \\
p''&=&bq/a+1,~~~\text{and}~~~q''=q+2-bq/a.
\label{eq_abb't_p''q''}
\end{eqnarray}
\end{lem}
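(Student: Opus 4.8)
The plan is to convert the integrality of the roots of \eqref{quadratic_polynomial} into an elementary parametrization via Vieta's formulas together with a Chinese-Remainder-type argument. If $p''\le q''$ are two roots of \eqref{quadratic_polynomial}, then $p''+q''=q+3$ and $p''q''=pq+2$, hence $(p''-1)+(q''-1)=q+1$ and $(p''-1)(q''-1)=p''q''-(p''+q'')+1=(p-1)q$. So the task is to describe exactly when there exist integers $p'',q''\ge 2$ satisfying these two relations, and to match the resulting data with the claimed quantities $a,b,b',t$.

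For the forward implication I would start from such roots $p''\le q''$ and first record the bounds $2\le p''-1<q$: the lower one holds since $(p''-1)(q''-1)=(p-1)q\ge 2q$ while $q''-1\le q$, and the upper one since $2(p''-1)\le(p''-1)+(q''-1)=q+1$. Put $g:=\gcd(q,p''-1)$, $a:=q/g$, $b:=(p''-1)/g$, $k:=g$, so that $\gcd(a,b)=1$, $1\le b<a$, $a\ge 2$, $q=ak$ and $p''-1=bk$. From $q''-1=q+1-(p''-1)=(a-b)k+1$ and $(p''-1)(q''-1)=(p-1)q$ one gets $b\bigl((a-b)k+1\bigr)=(p-1)a$; since $\gcd(a,b)=1$ this forces $a\mid(a-b)k+1$, equivalently $bk\equiv 1\pmod a$. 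Letting $b'\in\{1,\dots,a-1\}$ be the inverse of $b$ modulo $a$, we get $k\equiv b'\pmod a$, and since $k\ge 1$ has least positive residue $b'$ we may write $k=at+b'$ with an integer $t\ge 0$. Substituting $k=at+b'$ into $q=ak$, $p''-1=bk$, $q''-1=(a-b)k+1$ and $p-1=b\bigl((a-b)k+1\bigr)/a$ and simplifying (using $a\mid bb'-1$ to clear denominators) yields exactly \eqref{eq_abb't_p}--\eqref{eq_abb't_p''q''}. The constraints $\gcd(a,b)=1$, $1\le b,b'<a$, $bb'\equiv 1\pmod a$ hold by construction; it remains to verify $bb'+t\ge 2$, which I will show is equivalent to $p\ge 3$. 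Indeed \eqref{eq_abb't_p} can be rewritten as $p-1=b\bigl[(a-b)t+c\bigr]$ with $c:=(b'(a-b)+1)/a$ a positive integer, so $p-1\ge 1$ always, and $p-1=1$ holds precisely when $b=1$, $t=0$, $c=1$, i.e.\ precisely when $(b,b',t)=(1,1,0)$, i.e.\ precisely when $bb'+t=1$.

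For the converse, given nonnegative integers $a,b,b',t$ satisfying the listed conditions I would define $p,q,p'',q''$ by \eqref{eq_abb't_p}--\eqref{eq_abb't_p''q''}. Since $q=a(at+b')$ is divisible by $a$, the numbers $p''$ and $q''$ are integers, and with $k:=at+b'\ge 1$ one has $p''=bk+1\ge 2$ and $q''=(a-b)k+2\ge 2$; a direct expansion gives $p''+q''=q+3$ and $p''q''=pq+2$ (the latter amounting to $p-1=b((a-b)k+1)/a$, which reduces to \eqref{eq_abb't_p}), so $p''$ and $q''$ are the roots of \eqref{quadratic_polynomial}. Finally the conditions force the ambient hypotheses $3\le p\le q$: $p\ge 3$ by the equivalence established above, and $q-p=t(a^2-ab+b^2)+b'(a-b)+b(bb'-1)/a-1\ge 0$, with equality only in the excluded case $(b,b',t)=(1,1,0)$, so $p\le q$.

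The main obstacle I expect is the bookkeeping in two places: (i) verifying that the divisibility $b\mid(p-1)a$ together with $\gcd(a,b)=1$ collapses exactly to the single congruence $bk\equiv 1\pmod a$, which is what legitimizes the substitution $k=at+b'$; and (ii) the algebraic reduction of $p-1=b\bigl((a-b)k+1\bigr)/a$ with $k=at+b'$ to the closed form \eqref{eq_abb't_p}, which repeatedly uses $bb'\equiv 1\pmod a$ to cancel the factor $a$. One further point to handle with care is the ordering $p''\le q''$: the canonical choice $b=(p''-1)/g$ automatically satisfies $2b\le a+1$ (because $p''-1\le(q+1)/2$), and this is exactly what makes $p''\le q''$ in \eqref{eq_abb't_p''q''}, whereas other, non-canonical choices of $(a,b,b',t)$ producing the same $(p,q)$ may interchange the two roots; so in the converse direction the displayed formulas for $p''$ and $q''$ are to be read as producing the unordered pair $\{p'',q''\}$ of roots.
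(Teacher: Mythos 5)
Your proposal is correct and follows essentially the same route as the paper's proof: write $p''-1=bq/a$ in lowest terms, use $\gcd(a,b)=1$ to extract the congruence $bs\equiv 1\pmod a$ (your $bk\equiv 1\pmod a$), set $s=at+b'$ and substitute to get \eqref{eq_abb't_p}--\eqref{eq_abb't_p''q''}, with the converse checked directly through Vieta's formulas. Your extra observations — the explicit equivalence $bb'+t\geq 2\iff p\geq 3$ and the caveat that non-canonical choices of $(a,b,b',t)$ may swap the two roots in \eqref{eq_abb't_p''q''} — are accurate refinements of points the paper treats more briefly.
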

\begin{proof}
For the necessity, suppose that the quadratic polynomial in \eqref{quadratic_polynomial} has two integral roots $p'', q''\in [2, \infty).$ Then $p''q''=pq+2$ and $p''+q''=q+3.$ Thus
\begin{equation}
(p''-1)\cdot(q+1-(p''-1))=(p-1)q.
\label{eq_multi}
\end{equation}
Note that $p''-1 \leq q,$ otherwise we have $p=2$ which contradicts to $3 \leq p.$ Let
\begin{equation}\label{eq_p''-1}
p''-1=\frac{bq}{a}
\end{equation}
where $1 \leq b <a$ are integers and $\text{gcd}(a,b)=1.$ Then $q \equiv 0$ (mod $a$). Let $q=as$ for some $s \in \mathbb{N}.$
Thus \eqref{eq_multi} becomes
\begin{equation}
\frac{b((a-b)s+1)}{a}=p-1.
\label{eq_multi_2}
\end{equation}
Then $(a-b)s+1 \equiv 0$ (mod $a$) since $\text{gcd}(a,b)=1.$ Hence $bs \equiv 1$ (mod $a$). Let $s=at+b'$ where $t$ and $1\leq b'<a$ are nonnegative integers with $bb'=bs-bat\equiv 1$ (mod $a$). Therefore,
$q=as=a^2t+ab'$ as stated in (\ref{eq_abb't_q}).
Substituting the above $s=at+b'$ into \eqref{eq_multi_2}, we have (\ref{eq_abb't_p}).
The expression formulae of $p''$ and $q''$ are immediate from \eqref{eq_p''-1} and \eqref{eq_p''q''_q}.
Note that if $t=0$ and $bb'=1$ then $p=2$, violating the assumption $p\geq 3.$ Hence $bb'+t\geq 2.$
\medskip

For the sufficiency, we check that for nonnegative integers $a,b,b',t$ satisfying $1 \leq b,b' < a,$ $\text{gcd}(a,b)=1,$ $bb' \equiv 1~(\text{mod}~a),$ and $bb'+t \geq 2,$ the corresponding values of $p,q,p'',q''$ are feasible. Note that we can rewrite \eqref{eq_abb't_q} to \eqref{eq_abb't_p''q''} as
\begin{eqnarray}
p&=&\frac{b[(a-b)(at+b')+1]}{a}+1,
\nonumber \\
q&=&a(at+b'),
\nonumber \\
p''&=&b(at+b')+1,~~~\text{and}~~~q''=(at+b')(a-b)+2.
\nonumber
\end{eqnarray}
One can immediately see that $p'',q''$ are both integers not less than $2.$ Moreover, the sum and product of $p'',q''$ are
$$\left\{
\begin{array}{c}
  p''+q''=q+3 \\
  p''q''=pq+2
\end{array}
\right.,$$
which imply that $p'',q''$ are the two integral roots of \eqref{quadratic_polynomial}.
\end{proof}

\medskip

To quickly find a non-isomorphic cospectral graphs pair which are nearly complete bipartite, a special case of Theorem~\ref{thm_Kpq+} is provided in the following corollary.
\begin{cor}\label{cor_kpq+}
For each pair of positive integers $(t,a)$ with $a \geq 2,$ the graph $$K_{(a-1)t+2,a^2t+a}^+$$ is not DS. Moreover, $$K_{at+2,a(a-1)t+a+1}^- \cup (a-1)tK_1$$ is its unique cospectral graph.
\end{cor}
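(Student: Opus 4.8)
The plan is to derive Corollary~\ref{cor_kpq+} as a direct instance of Theorem~\ref{thm_Kpq+} by exhibiting, for each pair $(t,a)$ with $a \ge 2$, an explicit integral factorization witnessing that the quadratic \eqref{quadratic_polynomial} splits over the integers with both roots in $[2,\infty)$. Concretely, I would set $p := (a-1)t+2$ and $q := a^2t+a$, and propose the candidate roots $p'' := at+2$ and $q'' := a(a-1)t+a+1$. The whole argument then reduces to two routine verifications plus one invocation of the main theorem.

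First I would check that $p$ and $q$ are legitimate parameters for Theorem~\ref{thm_Kpq+}, i.e.\ that $3 \le p \le q$. Since $a \ge 2$ and $t \ge 1$ (the parameter $t$ ranges over positive integers here, as in Corollary's statement — or if $t=0$ is allowed one checks the degenerate case separately), we have $p = (a-1)t+2 \ge 3$, and $q - p = a^2t + a - (a-1)t - 2 = (a^2-a+1)t + a - 2 \ge 0$, so $p \le q$. Then I would verify the two Vieta relations that characterize the roots of \eqref{quadratic_polynomial}: namely $p'' + q'' = q+3$ and $p''q'' = pq+2$. The sum is immediate: $(at+2) + (a(a-1)t+a+1) = a^2t + a + 3 = q+3$. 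For the product, expanding $p''q'' = (at+2)\bigl(a(a-1)t+a+1\bigr)$ and $pq+2 = \bigl((a-1)t+2\bigr)(a^2t+a)+2$ and comparing coefficients of $t^2$, $t$, and the constant term shows both equal $a^2(a-1)t^2 + (a^2 + 2a^2 - 2a + 2a)t + (2a+2) = a^2(a-1)t^2 + 3a^2 t + 2a+2$; I would present this as a short one-line coefficient match rather than grinding it out. Since $p'', q''$ are manifestly positive integers with $p'' = at+2 \ge 4 \ge 2$ and $q'' \ge p''$, they are indeed two integral roots of \eqref{quadratic_polynomial} lying in $[2,\infty)$.

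With the hypotheses of Theorem~\ref{thm_Kpq+} met, that theorem immediately yields that $K_{p,q}^+ = K_{(a-1)t+2,\,a^2t+a}^+$ is not DS, and that its unique non-isomorphic cospectral graph is $K_{p'',q''}^- \cup (p-2)K_1 = K_{at+2,\,a(a-1)t+a+1}^- \cup (a-1)t\,K_1$, since $p-2 = (a-1)t$. This matches the statement of the corollary exactly. (Alternatively, one can instead derive the corollary from Lemma~\ref{lem_abb't} by specializing $b=1$, $b'=1$, $a \ge 2$, $t$ arbitrary: then $\gcd(a,b)=1$ and $bb' \equiv 1 \pmod a$ hold trivially, $bb'+t = 1+t \ge 2$, and formulas \eqref{eq_abb't_p}--\eqref{eq_abb't_p''q''} reduce precisely to $p=(a-1)t+1+1 = (a-1)t+2$, $q = a^2t+a$, $p'' = at+1+1 = at+2$, $q'' = q+2-at = a^2t+a+2-at = a(a-1)t+a+2$ — wait, this gives $q''$ off by one from the $b=1$ substitution, so I would double-check: with $b=1$, $p''-1 = q/a = at+1$, giving $p'' = at+2$ and $q'' = q+3-p'' = a^2t+a+3-at-2 = a(a-1)t+a+1$, consistent.)

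I do not anticipate a genuine obstacle: the corollary is a specialization, so the only real work is the algebraic identity $p''q'' = pq+2$, which is a degree-two polynomial identity in $t$ (with $a$ as a parameter) and hence verified by matching three coefficients. The one point requiring a moment of care is bookkeeping the number of isolated vertices — confirming $p-2 = (a-1)t$ — and making sure the edge cases at the boundary of the parameter range ($a=2$, or $t$ small) do not violate $3 \le p \le q$ or $p'' \le q''$; I would note these are all satisfied by the inequalities above. I would keep the written proof to roughly a paragraph: state the candidate values of $p,q,p'',q''$, verify the Vieta relations and the size constraints, and invoke Theorem~\ref{thm_Kpq+}.
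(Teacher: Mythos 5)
Your proposal is correct and reaches the corollary essentially the same way the paper does: everything reduces to Theorem~\ref{thm_Kpq+} applied with $p=(a-1)t+2$, $q=a^2t+a$, $p''=at+2$, $q''=a(a-1)t+a+1$, together with the bookkeeping $p-2=(a-1)t$. The only cosmetic difference is that you verify the Vieta conditions $p''+q''=q+3$ and $p''q''=pq+2$ for \eqref{quadratic_polynomial} directly, whereas the paper obtains the same four formulas by setting $b=b'=1$ in Lemma~\ref{lem_abb't} and then cites Theorem~\ref{thm_Kpq+} --- a route you also sketch, and carry out correctly after your self-correction of $q''$. One trivial slip: the common $t$-coefficient in $p''q''$ and $pq+2$ is $3a^2-a$, not $3a^2$ as written; both products equal $a^2(a-1)t^2+(3a^2-a)t+2a+2$, so the identity, and hence the proof, stands.
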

\begin{proof}
Let $b=b'=1$ in Lemma~\ref{lem_abb't}. Then \eqref{eq_abb't_p} to \eqref{eq_abb't_p''q''} become that
\begin{eqnarray}
p&=&(a-1)t+2,
\nonumber \\
q&=&a^2t+a,
\nonumber \\
p''&=&at+2,~~~\text{and}~~~q''=a(a-1)t+a+1.
\nonumber
\end{eqnarray}
Substituting these data into Theorem~\ref{thm_Kpq+},  we immediately have the proof.
\end{proof}

\medskip

\begin{exam}
By computational programming, we list all $K_{p,q}^+$'s that are not DS for $q \leq 20$ in the following table including the corresponding unique cospectral graphs and the values of parameters $a,b,b',t.$ Note that the choices of $(a,b,b',t)$ are not unique.
\begin{center}
\begin{tabular}{|c|c|c|c|c|c|}
  \hline
   & \text{The unique cospectral graph} & $(a,b,b',t)$ \\
  \hline
  $K_{3,6}^+$ & $K_{4,5}^- \cup K_1$ & $(2,1,1,1)$ or $(3,2,2,0)$ \\
  \hline
  $K_{4,10}^+$ & $K_{6,7}^- \cup 2K_1$ & $(2,1,1,2)$ or $(5,3,2,0)$ \\
  \hline
  $K_{5,14}^+$ & $K_{8,9}^- \cup 3K_1$ & $(2,1,1,3)$ or $(7,4,2,0)$ \\
  \hline
  $K_{4,12}^+$ & $K_{5,10}^- \cup 2K_1$ & $(3,1,1,1)$ or $(4,3,3,0)$ \\
  \hline
  $K_{5,15}^+$ & $K_{7,11}^- \cup 3K_1$ & $(3,2,2,1)$ or $(5,2,3,0)$ \\
  \hline
  $K_{6,18}^+$ & $K_{10,11}^- \cup 4K_1$ & $(2,1,1,4)$ or $(9,5,2,0)$ \\
  \hline
  $K_{5,20}^+$ & $K_{6,17}^- \cup 3K_1$ & $(4,1,1,1)$ or $(5,4,4,0)$ \\
  \hline
\end{tabular}
\end{center}

\end{exam}


\section*{Acknowledgments}
This research is supported by the National Science Council of Taiwan R.O.C. and Ministry of
Science and Technology of Taiwan R.O.C. respectively under the projects NSC 102-2115-M-009-
009-MY3 and MOST 103-2632-M-214-001-MY3-2.

\end{document}